\newtheorem{theorem}{Theorem}[section]
\theoremstyle{definition}
\theoremstyle{remark}
\numberwithin{equation}{section}
\begin{document}

\title[On the area of minimal surfaces in a slab]
{On the area of minimal surfaces in a slab}
\author[J. CHOE and B. Daniel]{JAIGYOUNG CHOE and BENO\^{I}T DANIEL}
\date{}
\thanks{J.C. supported in part by NRF 2011-0030044, SRC-GAIA. B.D. supported in part by the ANR-11-IS01-0002 grant.}
\address{Korea Institute for Advanced Study, Seoul, 130-722, Korea}
\email{choe@kias.re.kr}

\address{Universit\'e de Lorraine\\
Institut \'Elie Cartan de Lorraine\\
UMR 7502\\
CNRS\\
B.P. 70239\\
F-54506 Vand\oe{}uvre-l\`es-Nancy cedex\\
France\\
and Korea Institute for Advanced Study\\
Seoul\\
130-722\\
Korea}
\email{benoit.daniel@univ-lorraine.fr}

\date{}

\subjclass[2010]{53A10}
\keywords{Minimal surface, catenoid, Weierstrass representation}

\begin{abstract}
Consider a non-planar orientable minimal surface $\Sigma$ in a slab which is possibly with genus or with more than two boundary components. We show that there exists a catenoidal waist $\mathcal{W}$ in the slab whose flux has the same vertical component as $\Sigma$ such that ${\rm Area}(\Sigma)\geq{\rm Area}(\mathcal{W})$, provided the intersections of $\Sigma$ with horizontal planes have the same orientation.
\end{abstract}

\maketitle

\section{Introduction}

The catenoid is the first nontrivial minimal surface discovered. It was Euler who found it in 1744 in the process of proving that when the catenary is rotated about an axis it generates a surface of smallest area \cite{3}. In 1860 Bonnet showed that the catenoid is the only nonplanar minimal surface of revolution \cite{2}. More recently Schoen also characterized the catenoid as the unique complete minimal surface with finite total curvature and with two embedded ends \cite{8}. Similarly Lopez and Ros showed that the catenoid is the only complete embedded nonplanar minimal surface of finite total curvature and genus zero \cite{5}. Moreover, they proved that the catenoid is the unique complete embedded minimal surface in $\mathbb R^3$ of Morse index one \cite{6}. Finally, Collin \cite{collin} characterized the catenoid as the unique properly embedded minimal annulus.

More recently, rotationally symmetric compact pieces of the catenoid have been characterized as minimal annuli in a slab in two ways. Pyo showed that the catenoid in a slab is the only minimal annulus meeting the boundary of the slab in a constant angle \cite{7}. Also it was proved by Bernstein and Breiner that all embedded minimal annuli in a slab have area bigger than or equal to the minimum area of the catenoids in the same slab \cite{1}. That minimum is attained by the catenoidal waist along the boundary of which the rays from the center of the slab are tangent to the waist. This waist is said to be maximally stable because its proper subset is stable and any subset of the catenoid properly containing the waist is unstable (see Proposition 1, \cite{4}).

It is tempting to conjecture that Bernstein-Breiner's theorem should also hold for an immersed minimal surface possibly with genus and/or with more than two boundary components in a slab as they conjectured. In this paper we prove this conjecture provided the minimal surface is orientable and the intersections of the minimal surface with horizontal planes have the same orientation. The orientation of a horizontal section is induced by the surface so that its conormal is upward at regular points. If the minimal surface in a slab has more than two boundary components or nonzero genus, it may happen that a horizontal section of the surface consists of two or more closed curves which have both clockwise and counterclockwise orientations. It may also happen that a horizontal section contains a curve of rotation number zero, since the surface is not assumed to be embedded. If none of this happens, then we can prove the conjecture. Figure 1 shows the horizontal sections $\Sigma\cap P$ with the same orientation and with the mixed orientations.
\begin{center}
\includegraphics[width=5in]{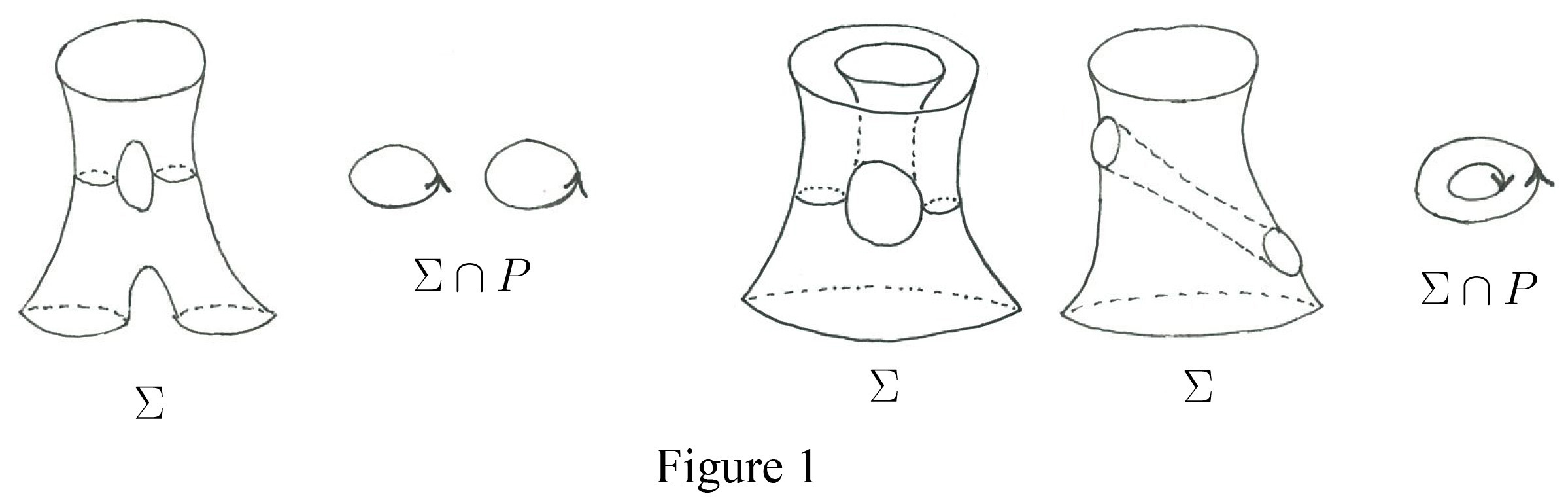}\\
\end{center}

For their proof Bernstein and Breiner used Osserman-Schiffer's theorem \cite{10} that the length $L(u)$ of the curve $\{u={\rm const}\}$ for a harmonic function $u$ on a minimal surface $\Sigma$ satisfies $$L''(u)\geq L(u),$$
where equality holds if and only if $\Sigma$ is the catenoid or an annulus in $\mathbb R^2$. By comparison,
we directly compute the area of $\Sigma$ using the conformal metric $\mathrm{d}s^2=\cosh^2\kappa(u,v)(\mathrm{d}u^2+\mathrm{d}v^2)$ on $\Sigma$, take the average of $\kappa(u,v)$ along $\{u={\rm const}\}$ and use the convexity of the function $\cosh^2$.

At the end of this paper, we propose some open problems in relation to our theorem.

\section{Theorem}

Given the catenoid $\mathcal{C}=\{(x,y,z)\in\mathbb R^3:\cosh z=\sqrt{x^2+y^2}\}$, $\mathcal{C}^b_a:=\mathcal{C}\cap\{a\leq z\leq b\}$ is called a catenoidal waist. If there exists a point $p=(0,0,c)$ such that the rays emanating from $p$ are tangent to $\mathcal{C}$ along the boundary circles $\mathcal{C}\cap\{z=a,b\}$, then $\mathcal{C}^b_a$ is called a {\it maximally stable} waist. This is because the homotheties centered at $p$ give a foliation of a tubular neighborhood of $\mathcal{C}^b_a$, which generates a Jacobi field $J$ on $\mathcal{C}^b_a$ with $|J|>0$ and vanishing only on $\partial\mathcal{C}^b_a$. Let $\beta>0$ be the unique solution to the equation
\begin{equation}\label{beta}
\tanh z=1/z.
\end{equation}
Then it is easy to see that the tangent to the graph of $r=\cosh z$ at $z=\beta$ passes through the origin. It follows that $\mathcal{C}^\beta_{-\beta}$ is maximally stable. The image of a catenoidal waist (respectively, a maximally stable waist) by a translation or a homothety will still be called a catenoidal waist (respectively, a maximally stable waist).

\begin{theorem}\label{theorem}
Given a horizontal slab $\mathrm H_{-a}^a:=\{-a\leq z\leq a\}$ in $\mathbb R^3$, let $\Sigma$ be a non-planar orientable compact immersed minimal surface in $\mathrm H^a_{-a}$ with $\partial\Sigma\subset\partial\mathrm H^a_{-a}$ such that all the components of its intersection with any horizontal plane have the same orientation, that is, their rotation numbers are either all positive or all negative. Then there exists a catenoidal waist $\mathcal{W}\subset\mathrm H^a_{-a}$ whose flux has the same vertical component as $\Sigma$ such that
\begin{equation}\label{area}
{\rm Area}(\Sigma)\geq{\rm Area}(\mathcal{W}).
\end{equation}
Also
\begin{equation}\label{area2}
{\rm Area}(\mathcal{W})\geq{\rm Area}\left(\frac{a}{\beta}\,\mathcal{C}^\beta_{-\beta}\right),
\end{equation}
where $\beta$ satisfies $\tanh\beta=\frac{1}{\beta}$ and $\frac{a}{\beta}\mathcal{C}^\beta_{-\beta}$ is the homothetic expansion of the catenoid $\mathcal{C}^\beta_{-\beta}$ by the factor of $\frac{a}{\beta}$. The boundary circles of $\frac{a}{\beta}\,\mathcal{C}^\beta_{-\beta}$ lie on the boundary of $\mathrm H^a_{-a}$ and $\frac{a}{\beta}\,\mathcal{C}^\beta_{-\beta}$ is maximally stable. Moreover,
$${\rm Area}(\Sigma)={\rm Area}\left(\frac{a}{\beta}\,\mathcal{C}^\beta_{-\beta}\right)$$
if and only if $\Sigma=\frac{a}{\beta}\,\mathcal{C}^\beta_{-\beta}$ up to translation.
\end{theorem}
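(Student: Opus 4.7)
My plan is to parametrize $\Sigma$ by isothermal coordinates $\zeta=u+iv$ in which $u$ coincides with the height function $z$, so $v$ is the locally defined harmonic conjugate. In these coordinates the induced metric takes the form $\mathrm{d}s^2=\cosh^2\kappa(u,v)(\mathrm{d}u^2+\mathrm{d}v^2)$ with $\cosh\kappa=(1+|g|^2)/(2|g|)$ in terms of the Weierstrass Gauss map $g$, so that $\kappa=\log|g|$ is harmonic away from the zeros and poles of $g$ (the critical points of $z$). The area is $\iint\cosh^2\kappa\,\mathrm{d}u\,\mathrm{d}v$ over the flat domain $\Omega$ in the $(u,v)$-plane. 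A short calculation shows the vertical flux through any component of $\{z=c\}$ equals its $v$-length; the orientation hypothesis ensures these add positively, so $|I(u)|:=|\{v:(u,v)\in\Omega\}|=F_3$, the vertical flux of $\Sigma$, for every $u\in(-a,a)$.

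The key step is to apply Jensen's inequality with the strictly convex, even function $\cosh^2$:
\[
\int_{I(u)}\cosh^2\kappa\,\mathrm{d}v\;\geq\;F_3\cosh^2\bar\kappa(u),\qquad \bar\kappa(u)\;:=\;\frac{1}{F_3}\int_{I(u)}\kappa\,\mathrm{d}v,
\]
with equality iff $\kappa(u,\cdot)$ is constant on every component of $I(u)$. Integrating gives $\mathrm{Area}(\Sigma)\geq F_3\int_{-a}^{a}\cosh^2\bar\kappa(u)\,\mathrm{d}u$. Between critical values of $z$ the components of $I(u)$ are closed $v$-circles, so integration by parts using $\partial_u^2\kappa=-\partial_v^2\kappa$ yields $\bar\kappa''(u)=0$, hence $\bar\kappa(u)=\alpha u+\beta$. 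Identifying $\arg g$ with the tangent angle along a horizontal section gives that the winding of $g$ along a component equals its rotation number, so by the orientation hypothesis $\alpha\geq 2\pi/F_3$. A change of variables together with convexity of $\cosh^2$ then yields
\[
\mathrm{Area}(\Sigma)\;\geq\;F_3\int_{-a}^a\cosh^2(\alpha u+\beta)\,\mathrm{d}u\;\geq\;F_3\int_{-a}^a\cosh^2\!\Big(\tfrac{u-z_0}{\mu}\Big)\mathrm{d}u=\mathrm{Area}(\mathcal{W}),
\]
where $\mathcal{W}$ is the catenoidal waist with $\mu=F_3/(2\pi)$ and appropriate $z_0$, proving \eqref{area}.

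For \eqref{area2} I would minimize the area of a catenoidal waist $\mu\mathcal{C}+(0,0,z_0)$ spanning $\mathrm H^a_{-a}$ over $(\mu,z_0)$: reflection symmetry places the minimum at $z_0=0$, and differentiating in $\mu$ gives the critical equation $\tanh(a/\mu)=a/\mu$, with positive solution $a/\mu=\beta$, recovering $(a/\beta)\mathcal{C}^\beta_{-\beta}$. For rigidity, equality in Jensen forces $\kappa=\kappa(u)$; the Cauchy--Riemann equations applied to $\log g=\kappa+i\arg g$ then force $\kappa$ linear and $g(\zeta)=e^{c_0\zeta+c_1}$, the Weierstrass datum of a catenoid, which equality in \eqref{area2} pins down as the maximally stable waist. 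The main obstacle is controlling $\bar\kappa$ at critical values of $z$, where $\kappa$ has logarithmic singularities and the $(u,v)$-chart becomes branched; this requires either a cutoff/limiting argument near the critical points or a distributional treatment of $\bar\kappa''$, with delta contributions at critical values that only reinforce the inequality.
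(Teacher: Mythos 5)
Your proposal follows essentially the same route as the paper: isothermal coordinates $w=u+iv$ with $u=z$, the metric $\cosh^2\kappa\,|\mathrm{d}w|^2$ with $\kappa=\log|g|$, Jensen's inequality applied fibrewise in $v$, linearity of the averaged function between critical values, the slope bound $2\pi/f$ coming from the winding of $g$ (rotation number) and the orientation hypothesis, comparison with the linear profile of a catenoid with the same vertical flux, and finally the one-parameter minimization among symmetric catenoidal waists. (One small correction along the way: each component of a level set contributes positively to the vertical flux automatically once the conormal is chosen upward; the orientation hypothesis is only needed for the rotation-number bound on the slope. Also, your critical equation $\tanh(a/\mu)=a/\mu$ has no positive solution; it should read $\tanh(a/\mu)=\mu/a$, i.e.\ $\tanh\beta=1/\beta$ with $\beta=a/\mu$, which is what you then use.)

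The genuine gap is precisely the point you defer at the end: the behavior of $\bar\kappa$ at the heights of horizontal points of $\Sigma$ (zeros and poles of $g$). Your suggested fix --- a distributional reading of $\bar\kappa''$ with ``delta contributions that only reinforce the inequality'' --- does not work as stated: the jumps of $\bar\kappa'$ at a critical height come from changes in the number of components and in the total rotation number of the level set, and their sign is not controlled, so no convexity of $\bar\kappa$ is available; moreover, what the comparison with the line of slope $2\pi/f$ actually requires is not convexity but (i) the slope bound on each interval between critical values and (ii) \emph{continuity of $\bar\kappa$ itself} across those critical heights (a jump of $\bar\kappa$ in either direction would break the pointwise comparison $|k|\leq|\bar\kappa|$ on one side of the anchor point). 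The paper settles exactly this: near a horizontal point one takes a coordinate $Z$ with $g(Z)=Z^{\pm m}$ and $\mathrm{d}w/\mathrm{d}Z=\mathrm{O}(Z^m)$, so that $\kappa\,\mathrm{d}v=\mathrm{Im}(\log|g|\,\mathrm{d}w)=\mathrm{O}(Z^m\log|Z|\,\mathrm{d}Z)$ extends continuously and vanishes at the point; combined with the continuous dependence of $\gamma_u$ on $u$, this gives finiteness and continuity of $\bar\kappa$ at the critical heights. You need this (or an equivalent cutoff argument carried out in detail) before the comparison step. Relatedly, note that $\bar\kappa$ is only \emph{piecewise} linear, with possibly different slopes on different intervals, so the comparison line must be chosen by cases (through a zero of $\bar\kappa$ if one exists in $[-a,a]$, otherwise anchored at the endpoint where $|\bar\kappa|$ is smallest); this is presumably what your ``appropriate $z_0$'' means, but it should be made explicit, since it is where the continuity of $\bar\kappa$ is used.
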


\begin{proof}
The minimality of $\Sigma$ in $\mathbb R^3$ implies that the Euclidean coordinates $x,y,z$ of $\mathbb R^3$ are harmonic on $\Sigma$. The critical points of $x,y,z$ are isolated on $\Sigma$. Let $u=z|_\Sigma$ and define $v=u^*$, the harmonic conjugate of $u$ on $\Sigma$. Note that $v$ is multivalued on $\Sigma$ but $\mathrm{d}v$ is well defined there. Let $\tilde{\Sigma}$ be the set of regular points of $z$; then $w=u+iv$ is a local complex parameter on $\tilde{\Sigma}$. The Gauss map $g:\Sigma\rightarrow \mathbb C\cup\{\infty\}$ is a meromorphic function which is used to express the metric of $\Sigma$:
$$\mathrm{d}s^2=\cosh^2\kappa|\mathrm{d}w|^2,\,\,\,\kappa=\ln|g|={\rm Re}(\log g),\,\,\,\cosh\kappa=\frac{1}{2}\left(|g|+\frac{1}{|g|}\right).$$
The key idea of the proof is to take the average of the harmonic function $\kappa(u,v)$ along the level curves of $u$, $\gamma_c:\{u={\rm c}\}$. To do so, we need to find the total variation of $v$ along $\gamma_c$:
\begin{equation}\label{f}
\int_{\gamma_c}\mathrm{d}v=\int_{\gamma_c}\mathrm{d}z^*:=-f(c).
\end{equation}
The unit conormal $\nu$ to $\gamma_c$ on $\Sigma$ defines the {\it flux} of $\Sigma$ along $\gamma_c$ as follows:
$$\int_{\gamma_c}\nu\mathrm{d}s=\int_{\gamma_c}\left(\begin{array}{c}\mathrm{d}x(\nu)\\\mathrm{d}y(\nu)\\\mathrm{d}z(\nu)\end{array}\right)\mathrm{d}s
=-\int_{\gamma_c}\left(\begin{array}{c}\mathrm{d}x^*(\tau)\\\mathrm{d}y^*(\tau)\\\mathrm{d}z^*(\tau)\end{array}\right)\mathrm{d}s,$$
where $\tau$ is the unit tangent to $\gamma_c$, i.e., $-90^\circ$-rotation of $\nu$ on $\Sigma$. Hence $f(c)$ equals the vertical component of the flux of $\Sigma$ along $\gamma_c$, which is in fact constant for $-a\leq c\leq a$. From now on, we simply denote this constant by $f$. This constant is positive, since the orientation of $\gamma_c$ is chosen such that the conormal is upward.

The average $h(u)$ of $\kappa(u,v)$ along $\gamma_u$ is defined by
$$h(u)=-\frac{1}{f}\int_{\gamma_u}\kappa(u,v)\mathrm{d}v.$$
It should be remarked that the average is taken over all the components of $\gamma(u)$ and so that the topology of $\Sigma$ is irrelevant to $h(u)$. Since $\kappa(u,v)$ is harmonic away from the critical points of $u$, so is $h(u)$ away from the critical values of $u$:
$$h''=\Delta h=0.$$
Hence $h(u)$ is linear in an open interval where $u$ is regular. Let's compute the slope of $h(u)$. By the Cauchy-Riemann equations,
$$h'(u)=-\frac{1}{f}\int_{\gamma_u}\kappa_u(u,v)\mathrm{d}v=-\frac{1}{f}\int_{\gamma_u}\kappa_v^*(u,v)\mathrm{d}v.$$
Since
$$\log g=\ln|g|+i\arg g=\kappa(u,v)+i\,\kappa^*(u,v),$$ we have $\kappa^*=\arg g$ and so
$\int_{\gamma_u}\kappa_v^*(u,v)\mathrm{d}v$ equals the total variation of $\arg g$ on $\gamma_u$, i.e., $2\pi$ times the total rotation number $r(\gamma_u)$ of the set $\gamma_u$ which is the union of a finite number of closed curves.
Hence
$$h'(u)=-\frac{2\pi \cdot r(\gamma_u)}{f}.$$


By our hypothesis, $\gamma_u$ consists of $n$ closed curves with the same orientation, which we may assume all clockwise. Hence \begin{equation}\label{r>1}r(\gamma_u)\leq-n\leq-1.\end{equation} Moreover this rotation number is constant on any interval without critical value of the height function.

The function $h$ is piecewise linear and $h'$ is a step function. We have $\log g(w)=\pm\infty$ at the points where the tangent plane to $\Sigma$ is horizontal, i.e., $g(w)=0, \infty$. We recall that these points are isolated. So $h'(u)$ is not defined at the height $u$ of the horizontal points. Even so, $h(u)$ is continuous at this height. This can be proved as follows.

We claim that the $1$-form $\kappa(u,v)\mathrm{d} v$ extends continuously at a horizontal point $p$. Let $Z$ be a complex coordinate around $p$ such that $Z(p)=0$, $$\frac{\mathrm{d}w}{\mathrm{d}Z}=\mathrm{O}(Z^m)\quad\textrm{as}\quad Z\to0$$ and $$g(Z)=Z^m\quad\textrm{or}\quad g(Z)=Z^{-m}$$ for some positive integer $m$. Hence
$$\kappa(u,v)\mathrm{d} v=\mathrm{Im}(\log|g(w)|\mathrm{d} w)=\mathrm{O}(Z^m\log|Z|\mathrm{d} Z).$$ This proves the claim. And since $\gamma_u$ depends continuously on $u$, this proves that $h$ is continuous at the height of a horizontal point.


We now compute the area of $\Sigma$:
$${\rm Area}(\Sigma)=\int_{-a}^a\int_{\gamma_u}\cosh^2\kappa(u,v)\mathrm{d}v\,\mathrm{d}u\geq\int_{-a}^af\cosh^2h(u)\mathrm{d}u,$$
where we have the inequality due to the convexity of the function $\cosh^2$. Recall that all the components of $\gamma_u$ are assumed to have the same orientation. Hence \eqref{r>1} implies that $h(u)$ is an increasing function with slope at least $2\pi/f$. If $h(u)$ vanishes at some height $d$, define
$$k(u)=\frac{2\pi}{f}(u-d).$$
(See Figure 2.) If $h(u)$ has no zero and ${\rm min}\,h(u)=h(-a)>0$, define
$$k(u)=\frac{2\pi}{f}(u+a)+h(-a),$$
and if ${\rm max}~h(u)=h(a)<0$, define
$$k(u)=\frac{2\pi}{f}(u-a)+h(a).$$
\begin{center}
\includegraphics[width=2in]{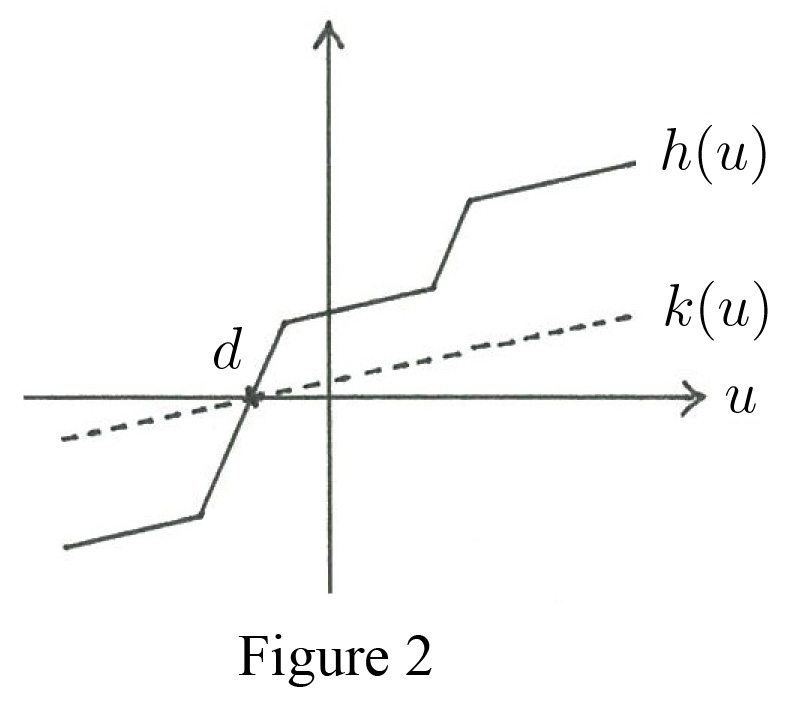}\\
\end{center}
Then we have for every $u$
$$h(u)\leq k(u)\leq0~~~~\,\,\,{\rm or}~~~~\,\,\,0\leq k(u)\leq h(u).$$
It follows that
$$\cosh h(u)\geq\cosh k(u).$$
Therefore
\begin{equation}\label{hk}
{\rm Area}(\Sigma)\geq\int_{-a}^af\cosh^2h(u)\mathrm{d}u\geq\int_{-a}^af\cosh^2k(u)\mathrm{d}u.
\end{equation}

By the way, for some $d_0\in\mathbb R$
$$k(u)=\frac{2\pi}{f}u+d_0={\rm Re}\left(\frac{2\pi}{f}w+d_0\right).$$
Consider the Weierstrass data on $(\mathbb C/if\mathbb Z,w)$:
$$G=\exp\left({\frac{2\pi}{f}w+d_0}\right) \,\,{\rm and}\,\, {\rm the}\,\, 1{\rm -form}\,\,\frac{1}{G(w)}\mathrm{d}w.$$
These data give rise to the catenoid which we denote as $\mathcal{C}(2\pi/f;d_0)$.
So
\begin{eqnarray*}{\rm Area}(\Sigma)&\geq&\int_{-a}^af\cosh^2 k(u)\mathrm{d}u\\&=&
\int_{-a}^a\int_0^f\cosh^2\ln\left|\exp\left(\frac{2\pi}{f}w+d_0\right)\right|\mathrm{d}v\mathrm{d}u\\
&=&{\rm Area}(\mathcal{C}(2\pi/f;d_0)\cap\mathrm H^a_{-a}),
\end{eqnarray*}
which proves \eqref{area} with $\mathcal{W}=\mathcal{C}(2\pi/f;d_0)\cap\mathrm H^a_{-a}$. Note here that
\begin{eqnarray}
{\rm Area}(\mathcal{C}(2\pi/f;d_0)\cap\mathrm H^a_{-a})&=&\int_{-a}^af\cosh^2\left(\frac{2\pi}{f} u+d_0\right)\mathrm{d}u\nonumber\\
&\geq&\int_{-a}^af\cosh^2\left(\frac{2\pi}{f} u\right)\mathrm{d}u\label{d0}\\
&=&{\rm Area}(\mathcal{C}(2\pi/f;0)\cap\mathrm H^a_{-a}).\nonumber
\end{eqnarray}

Now we need to find the catenoidal waist in $\mathrm H^a_{-a}$ with smallest area. This was already proved by Bernstein-Breiner \cite{1}, but is also proved here for completeness. This is a straightforward computation.

We observe that, by \eqref{d0}, a least area catenoidal waist is of the form $\mathcal{C}(\lambda;0)\cap\mathrm H^a_{-a}$ for some $\lambda>0$, i.e., symmetric with respect to the plane $\{z=0\}$. So we have only to consider catenoidal waists of this form.
The catenoid $\mathcal{C}(\lambda;0)$ has $2\pi/\lambda$ as the vertical component of the flux along a horizontal circle. Hence
$$A(\lambda):={\rm Area}(\mathcal{C}(\lambda;0)\cap\mathrm H_{-a}^a)
=\frac{2\pi}\lambda\int_{-a}^a\cosh^2(\lambda u)\mathrm{d}u=\frac{2\pi a}\lambda+\frac{\pi}{\lambda^2}\sinh (2\lambda a),$$
and
$$A'(\lambda)=-\frac{2\pi a}{\lambda^2}-\frac{2\pi}{\lambda^3}\sinh(2\lambda a)+\frac{2a\pi}{\lambda^2}\cosh(2\lambda a).$$
Since $A'(\lambda)=0$ has a unique solution, ${\rm Area}(\mathcal{C}(\lambda;0)\cap\mathrm H_{-a}^a)$ must have a unique minimum. At that minimum we can easily show that $\lambda$ satisfies
$$\tanh (\lambda a)=\frac{1}{\lambda a}.$$
Then \eqref{beta} implies that $\beta=\lambda a$ and hence
\begin{equation}\label{ew}
{\rm Area}(\mathcal{C}(2\pi/f;0)\cap\mathrm H^a_{-a})\geq{\rm Area}(\mathcal{C}(\beta/a;0)\cap\mathrm H^a_{-a}).
\end{equation}

Since the central waist circle of $\mathcal{C}(\beta/a;0)$ has radius $a/\beta$, we have
$$\mathcal{C}(\beta/a;0)=\frac{a}{\beta}\,\mathcal{C}.$$
Therefore
$$\mathcal{C}(\beta/a;0)\cap\mathrm H_{-a}^a=\frac{a}{\beta}\,\mathcal{C}^\beta_{-\beta},$$
which, together with \eqref{d0} and \eqref{ew}, gives \eqref{area2}. Clearly $\frac{a}{\beta}\,\mathcal{C}^\beta_{-\beta}$ is maximally stable. If \begin{equation}\label{abeta}
{\rm Area}(\Sigma)={\rm Area}\left(\frac{a}{\beta}\,\mathcal{C}^\beta_{-\beta}\right),
\end{equation}
then all the inequalities in \eqref{hk}, \eqref{d0}, \eqref{ew} should become equalities. Thus
$$\Sigma=\frac{a}{\beta}\,\mathcal{C}^\beta_{-\beta}.$$
Similarly
$${\rm Area}(\Sigma)={\rm Area}(\mathcal{W})$$
if and only if $\Sigma=\mathcal{W}=\mathcal{C}(2\pi/f;d_0)\cap\mathrm H^a_{-a}$.
This completes the proof.
\end{proof}
\vspace{.3cm}

In conclusion, we would like to propose the following:\\

\noindent{\bf Problems.}

{\bf 1.} Does there exist a minimal surface with two boundary components in a slab which has a horizontal section with mixed orientations? (See Figure 1.)

{\bf 2.} Let $\Sigma\subset\mathbb R^3$ be an immersed minimal annulus in a slab ${\rm H}$ such that $\Sigma\cap P$ is a figure-eight curve for any horizontal plane $P\subset {\rm H}$. And let $\mathcal{W}$ be the least area maximally stable catenoidal waist in ${\rm H}$. Is it true that ${\rm Area}(\Sigma)\geq{\rm Area}(\mathcal{W})$? More generally, is it possible to remove in Theorem \ref{theorem} the hypotheses on the orientability and on the rotation numbers of the level sets of the surface?

{\bf 3.} Given a minimal hypersurface $\Sigma$ in a slab $\mathrm H$ of $\mathbb R^n$, $n\geq4$, show that its volume is bigger than that of an $(n-1)$-dimensional catenoid in $\mathrm H$ (see \cite{9} for the higher dimensional catenoid), or their volumes are equal if and only if $\Sigma$ is the maximally stable catenoidal waist symmetric with respect to the mid-hyperplane of $\mathrm H$.

{\bf 4.} Given a minimal surface $\Sigma$ in a slab $\mathbb H^2\times[-a,a]$ of the homogeneous manifold $\mathbb H^2\times\mathbb R$, where $\mathbb H^2$ denotes the hyperbolic plane, prove a theorem similar to Theorem \ref{theorem}.

\bibliographystyle{plain}
\bibliography{slab}

\end{document}